\DeclareMathOperator{\dif}{d}
\newcommand{\Cal}{\mathcal{C}}
\newcommand{\Dcal}{\mathcal{D}}
\renewcommand{\H}{\mathscr{H}}
\newcommand{\K}{\mathscr{K}}
\def \e{\eta}
\def \ep{\varepsilon}
\def \phi{\varphi}
\def \Phi{\varPhi}
\def \p{\pi}
\def \r{\rho}
\def \C{\mathbb{C}\,}
\def\widecheckg{g^{\hspace*{-2.5pt}\vbox to 5pt{\hbox to
0pt{\LARGE$\check{}$}}}\hspace*{2pt}}
\def\widecheckl{\lambda^{\hspace*{-3.5pt}\vbox to 8pt{\hbox to
0pt{\LARGE$\check{}$}}}\hspace*{2pt}}
\begin{document}

\title{A simple construction of generalized\\
complex manifolds}
\author{Radu~Pantilie}

\email{\href{mailto:Radu.Pantilie@imar.ro}{Radu.Pantilie@imar.ro}}
\address{R.~Pantilie, Institutul de Matematic\u a ``Simion~Stoilow'' al Academiei Rom\^ane,
C.P. 1-764, 014700, Bucure\c sti, Rom\^ania}
\subjclass[2010]{53D18, 53C55}
\keywords{generalized complex manifolds}

\newtheorem{thm}{Theorem}[section]
\newtheorem{lem}[thm]{Lemma}
\newtheorem{cor}[thm]{Corollary}
\newtheorem{prop}[thm]{Proposition}

\theoremstyle{definition}

\newtheorem{defn}[thm]{Definition}
\newtheorem{rem}[thm]{Remark}
\newtheorem{exm}[thm]{Example}

\numberwithin{equation}{section}

\maketitle
\thispagestyle{empty}
\vspace{-0.5cm}
\section*{Abstract}
\begin{quote}
{\footnotesize  We construct a natural generalized complex structure on the total space of any bundle
endowed with a Chern connection and whose typical fibre is a homogeneous symplectic manifold.
This extends constructions of \cite{AleDav-GC} and \cite{CavGua-nil} and leads to natural examples
of holomorphic maps between generalized complex manifolds.}
\end{quote}

\section*{Introduction} 

\indent 
The notion of Dirac structure is the natural generalization of the notions of symplectic and Poisson structures. This is achieved 
through the essentially equivalent notion of presymplectic foliation and, up to the integrability, it corresponds, on any 
manifold $M$, to a subbundle of $TM\oplus T^*M$ which is maximally isotropic with respect to the inner product induced by the 
natural identification of the tangent bundle with its bidual.\\ 
\indent 
On complexifying, one is led to maximal isotropic subbundles of \mbox{$T^{\C}\!M\oplus(T^{\C}\!M)^*$.} Obviously, any such 
subbundle $L$\,, which further satisfies $L\cap\overline{L}=0$\,, is the \mbox{${\rm i}$-eigenbundle} of an orthogonal complex structure on $TM\oplus T^*M$. 
Together with the suitable integrability condition, this gives the notion of \emph{generalized complex structure} (see~\cite{Gua-thesis}\,).\\ 
\indent 
Now, up to products and $B$-field transformations, the following seem to exhaust the known `God given' (that is, not manufactured) 
examples of such structures:\\ 
\indent 
\quad(i) the (classical) complex structures;\\  
\indent 
\quad(ii) the symplectic structures;\\  
\indent 
\quad(iii) the generalized complex structures associated to the holomorphic Poisson structures;\\ 
\indent 
\quad(iv) the generalized complex structures associated to the holomorphic foliations on K\"ahler manifolds.\\ 
\indent 
The aim of this note is to add one more natural construction to this list (Theorem \ref{thm:gc_C_hg}\,; see, also, Corollary \ref{cor:gc_C_hg}\,). 
This extends constructions of \cite{AleDav-GC} and \cite{CavGua-nil} 
and, also, leads to natural examples of holomorphic maps between generalized complex manifolds.\\ 
\indent 
Our construction works for any bundle endowed with a Chern connection and whose typical fibre is a homogeneous symplectic manifolds, 
and seems to, also, give examples of  `generalized holomorphic bundles' whose typical fibres are symplectic.

\section{The construction}

\indent
Let $(P,M,G^{\C})$ be a holomorphic principal bundle whose structural group is the complexification of a Lie group $G$.
Suppose that $(P,M,G^{\C})$ admits a (smooth) reduction $(Q,M,G)$ and that $G$ acts transitively, by symplectic diffeomorphisms, 
on a symplectic manifold $(F,\ep)$\,.\\ 
\indent
Let $E$ be the bundle with typical fibre $F$ associated to $Q$\,, through the action of $G$ on $F$. We shall, also, denote
by $\ep$ the induced symplectic structure on the foliation formed by the fibres of $E$.\\
\indent
Let $\H\subseteq TE$ be the connection on $E$ induced by the Chern connection (see \cite[p.\ 185]{KoNo-II}\,) of $Q$\,, determined by $P$. 
As $\p^{-1}(TM)=\H$, where $\p:E\to M$ is the projection, the complex structure of $M$ induces a decomposition $\H^{\C}=\H^{1,0}\oplus\H^{0,1}$.

\begin{thm} \label{thm:gc_C_hg}
$L\bigl(\H^{1,0}\oplus({\rm ker}\dif\!\p),{\rm i}\,\ep\bigr)$ is a generalized complex structure on $E$, where 
$\ep$ is extended, over $\H$, such that $\iota_X\ep=0$\,, for any $X\in\H$.
\end{thm}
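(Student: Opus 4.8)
The plan is to use the standard description of a generalized complex structure as a maximal isotropic subbundle $L\subseteq T^{\C}\!E\oplus(T^{\C}\!E)^*$ satisfying $L\cap\overline{L}=0$ and closed under the Courant bracket. Writing $\V=\ker\dif\!\p$ for the (complexified) vertical bundle and $D=\H^{1,0}\oplus\V$, the subbundle $L=L(D,\mathrm{i}\,\ep)=\{X+\xi\in D\oplus(T^{\C}\!E)^*:\xi|_D=\iota_X(\mathrm{i}\,\ep)\}$ is maximal isotropic by construction, so only the remaining two conditions need to be checked.

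For $L\cap\overline{L}=0$ I would use that $\ep$ is real and $\overline{\V}=\V$, so that $\overline{L}=L(\H^{0,1}\oplus\V,-\mathrm{i}\,\ep)$. If $X+\xi\in L\cap\overline{L}$ then $X\in D\cap\overline{D}=\V$, since $\H^{1,0}\cap\H^{0,1}=0$; comparing the two resulting descriptions of $\xi|_{\V}$ forces $\iota_X\ep=0$ on $\V$, and nondegeneracy of $\ep$ along the fibres then gives $X=0$, hence also $\xi=0$.

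The integrability is the heart of the matter, and I would reduce it, via the standard criterion, to two statements: that $D$ is involutive and that $\dif\ep$ vanishes on $\wedge^3 D$ (here $\mathrm{i}\,\ep$ itself serves as the extension of the pairing form, so $\dif(\mathrm{i}\,\ep)=\mathrm{i}\,\dif\ep$). Involutivity splits into three pieces: $\V$ is involutive as the complexified tangent to the fibres; $[\H^{1,0},\V]\subseteq\V$ because horizontal lifts are $\p$-projectable while vertical fields project to $0$; and $[\H^{1,0},\H^{1,0}]\subseteq\H^{1,0}$ because the curvature of the Chern connection is of type $(1,1)$, so that its $(2,0)$-component --- which controls the vertical part of this bracket --- vanishes, the horizontal part being the lift of $[T^{1,0}M,T^{1,0}M]\subseteq T^{1,0}M$. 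For $\dif\ep|_{\wedge^3 D}=0$ I would argue by the number of vertical arguments, using throughout that $\iota_H\ep=0$ for $H$ horizontal: with three vertical arguments this is fibrewise closedness of the symplectic form; with exactly two, say $V,W\in\V$ and $H\in\H^{1,0}$, the formula for $\dif\ep$ collapses to $(\Lie_H\ep)(V,W)$; and the cases with at least two horizontal arguments vanish term by term from the bracket relations above together with $\iota_H\ep=0$.

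The main obstacle is the two-vertical case, which demands $(\Lie_{\tilde X}\ep)(V,W)=0$ for a horizontal lift $\tilde X$. This is precisely the assertion that horizontal parallel transport acts by fibrewise symplectomorphisms, and it is here that the hypotheses enter decisively: because the connection is reducible to $Q$ and $G$ acts on $(F,\ep)$ by symplectomorphisms, the flow of $\tilde X$ carries fibres to fibres symplectically, and differentiating yields $\Lie_{\tilde X}\ep=0$ on vertical vectors. Together with the $(1,1)$-type of the Chern curvature used for the involutivity of $\H^{1,0}$, this is the exact point at which ``Chern connection'' and ``symplectic homogeneous fibre'' are needed; everything else is bookkeeping.
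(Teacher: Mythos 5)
Your proposal is correct, but it takes a genuinely different route from the paper. You verify everything directly on the associated bundle $E$: maximal isotropy and $L\cap\overline{L}=0$ from the nondegeneracy of $\ep$ along the fibres, involutivity of $\H^{1,0}\oplus\ker\dif\!\p$ and the vanishing of $\dif\ep$ on it by a case count on the number of vertical arguments, with the two delicate inputs being the $(1,1)$-type of the Chern curvature and the fact that parallel transport acts by fibrewise symplectomorphisms (so $\Lie_{\widetilde X}\,\ep=0$ on vertical vectors). The paper instead pulls the whole structure back to the principal bundle: setting $\e=\r^*(\ep)$ and $L_Q=L\bigl(\K^{1,0}\oplus\ker\dif\!\p_Q,{\rm i}\,\e\bigr)$, it invokes the functoriality result \cite[Proposition 1.3]{ogc} to get $\r^*(L_E)=L_Q$ and $\r_*(L_Q)=L_E$, so that integrability need only be checked on $Q$ --- where it reduces to two one-line computations with basic sections $X,Y$ of $\K^{1,0}$ and fundamental vector fields $A,B$, namely $\dif\e(X,A,B)=X(\e(A,B))=0$ (left-invariance of $\e$ makes $\e(A,B)$ constant, and $[X,A]=0$) and $\dif\e(X,Y,A)=-\e([X,Y],A)=0$ (curvature of type $(1,1)$). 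The two arguments use the same two geometric facts; your parallel-transport lemma is exactly the downstairs shadow of the paper's ``$\e(A,B)$ is constant and $[X,A]=0$.'' What the paper's route buys is that the homogeneity of $(F,\ep)$ is consumed once, cleanly, by left-invariance on $G$, at the price of citing the pullback/pushforward machinery of \cite{ogc}; what your route buys is self-containedness on $E$ itself, at the price of having to justify the symplectic nature of parallel transport in the associated bundle (which you do correctly, via the reduction to $Q$ and the symplectic $G$-action). One small remark: for the involutivity of $\H^{1,0}\oplus\ker\dif\!\p$ alone, the $(1,1)$-type of the curvature is not needed (the vertical part of $[\H^{1,0},\H^{1,0}]$ lands in $\ker\dif\!\p$ in any case); where it is genuinely indispensable is in your ``two horizontal, one vertical'' case of $\dif\ep$, i.e.\ in killing $\ep([X,Y],V)$ --- which you do use it for, so the proof stands.
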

\begin{proof} 
Let $\r:G\to F$ be the projection determined by the action of $G$, by fixing a point of $F$. Then $\e=\r^*(\ep)$ is 
left-invariant and therefore determines a presymplectic structure, on the foliation formed by the fibres of $Q$\,, 
which we shall, also, denote by $\e$.\\ 
\indent 
Let $\K\subseteq TQ$ be the Chern connection on $Q$\,, determined by $P$, and extend $\e$ over $\K$ such that 
$\iota_X\e=0$\,, for any $X\in\K$. Also, the complex structure of $P$ induces a decomposition 
$\K^{\C}=\K^{1,0}\oplus\K^{0,1}$.\\ 
\indent 
Denote $L_E=L\bigl(\H^{1,0}\oplus({\rm ker}\dif\!\p),{\rm i}\,\ep\bigr)$ and 
$L_Q=L\bigl(\K^{1,0}\oplus({\rm ker}\dif\!\p_Q),{\rm i}\,\e\bigr)$\,, where $\p_Q:Q\to M$ is the projection. 
By using \cite[Proposition 1.3]{ogc}\,, it is easy to see that $\r^*(L_E)=L_Q$ and $\r_*(L_Q)=L_E$\,, where $\r$\,, also, denotes 
the projection from $Q$ onto $E$. Consequently, $L_E$ is integrable if and only if $L_Q$ is integrable.\\ 
\indent 
Now, the integrability of $\K^{1,0}\oplus({\rm ker}\dif\!\p_Q)$ is an immediate consequence of the fact that the curvature 
form of a Chern connection is of type $(1,1)$\,.\\ 
\indent 
To prove that $\dif\!\e=0$\,, on $\K^{1,0}\oplus({\rm ker}\dif\!\p_Q)$\,, it is sufficient to consider two cases. 
For this, let $X,Y$ be basic sections of $\K^{1,0}$ and let $A,B$ be fundamental vector fields on $Q$\,. 
Then, as $[X,A]=[X,B]=0$ and $\e(A,B)$ is constant, we have $\dif\!\e(X,A,B)=X(\e(A,B))=0$\,. 
Also, by using, again, that the curvature form of $\K$ is of type $(1,1)$\,, we obtain $\dif\!\e(X,Y,A)=-\e([X,Y],A)=0$\,.\\ 
\indent 
The proof is complete. 
\end{proof} 

\indent 
We have two classes of concrete examples for our construction:\\ 
\indent 
\quad(I) $G=F$ is Abelian and endowed with an invariant symplectic structure. For example, if $G=\bigl(S^1\bigr)^{2k}$ 
then $P$ is the bundle of adapted frames of a holomorphic vector bundle of rank $2k$\,, over $M$, which splits 
into the direct sum of holomorphic line bundles. Consequently, $Q$ is the bundle of adapted frames of some Hermitian structure 
on the corresponding holomorphic vector bundle. Moreover, if the Hermitian structure is real-analytic then the fibres of $P$ 
endowed with $\ep^{\C\!}$ are the leaves of the symplectic foliation of the canonical (holomorphic) Poisson quotient 
(see \cite[Theorem 2.3]{ogc}\,) of the complexification of $(E,L_E)$\,.\\ 
\indent 
\quad(II) $F$ is a coadjoint orbit endowed with the symplectic structure induced by the canonical 
Poisson structure on the Lie algebra of $G$. For example, the total space of any flag bundle of the tangent bundle 
of a Hermitian manifold is endowed, by Theorem \ref{thm:gc_C_hg}\,, with a generalized complex structure.\\ 

\indent 
With the same notations as above let $\phi:N\to M$ be a holomorphic map. Then, on endowing $\phi^{-1}(E)$ with the 
generalized complex structure induced by $\phi^{-1}(P)$ and $\phi^{-1}(Q)$\,, the canonical bundle map 
$\phi^{-1}(E)\to E$ is holomorphic.\\ 
\indent 
Similarly, any equivariant Poisson morphism from $F$ to some homogeneous symplectic manifold determines a holomorphic map 
between generalized complex manifolds constructed as in Theorem \ref{thm:gc_C_hg}\,.\\ 
\indent 
We end by noting that case (I)\,, above, can be strengthen, as follows. 

\begin{cor} \label{cor:gc_C_hg}
Let $G=\bigl(S^1\bigr)^{2k}$ and let $(Q,M,G)$ be a principal bundle. 
Then there exists a natural bijective correspondence between the following:\\ 
\indent 
{\rm (i)} Pairs formed of an invariant symplectic structure on $G$ and a holomorphic structure on the extension of $Q$ to $G^{\C}\!$;\\ 
\indent 
{\rm (ii)} $G$-invariant generalized complex structures, in normal form, on $Q$ 
for which the foliations of the associated Poisson structures are given by the fibres of~$Q$\,. 
\end{cor}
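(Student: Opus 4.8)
The plan is to set up a bijection by constructing maps in both directions and showing they are mutually inverse. First I would unwind what each side means concretely. For (i): $G=(S^1)^{2k}$ is a torus, so its complexification is $G^{\C}=(\C^*)^{2k}$; an invariant symplectic structure on $G$ is simply a constant, nondegenerate, skew form $\ep$ on the Lie algebra $\mathfrak{g}\cong\R^{2k}$, while a holomorphic structure on the extended bundle $P=Q\times_G G^{\C}$ amounts to giving $P$ the structure of a holomorphic principal $G^{\C}$-bundle, equivalently a Chern connection on $Q$ determined by $P$ (as in the body of the paper). For (ii): a $G$-invariant generalized complex structure in normal form on $Q$ is, along each fibre, determined by a presymplectic form, and the hypothesis that the Poisson foliation is exactly the fibration $Q\to M$ forces this form to be nondegenerate along the fibres and its kernel to be a horizontal complement, i.e.\ a connection.

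The forward map $(\text{i})\to(\text{ii})$ is essentially the content of Theorem~\ref{thm:gc_C_hg} specialized to the Abelian case: given $\ep$ and the holomorphic structure on $P$, one takes the associated Chern connection $\H\subseteq TQ$ with its $(1,0)$--$(0,1)$ splitting, extends $\ep$ over $\H$ by $\iota_X\ep=0$, and forms $L\bigl(\H^{1,0}\oplus(\ker\dif\!\p_Q),{\rm i}\,\ep\bigr)$. Since $G$ is Abelian the fibre action is by translations and $\ep$ is genuinely $G$-invariant, so the resulting structure is $G$-invariant, in normal form, and its Poisson foliation is by the fibres. The reverse map $(\text{ii})\to(\text{i})$ is where the work lies: starting from a $G$-invariant generalized complex structure in normal form whose Poisson foliation is the fibration, I would first read off the fibrewise presymplectic form; $G$-invariance makes it a constant form on $\mathfrak{g}$, and the normal-form/foliation condition makes it nondegenerate, yielding the invariant symplectic structure $\ep$. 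The annihilator $\ker\ep$ extended appropriately, together with the $(1,0)$-part of the defining isotropic subbundle, recovers a horizontal distribution $\H\subseteq TQ$; I would then argue that $\H$ is a principal connection and that its $(1,0)$-projection endows $P=Q\times_G G^{\C}$ with an integrable holomorphic structure, using the integrability of the generalized complex structure to deduce that the curvature is of type $(1,1)$.

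I would then check that the two constructions are mutually inverse. In one direction this is immediate: extending $\ep$ over the Chern connection and then restricting back to the fibres returns $\ep$, and the Chern connection is recovered as the kernel-plus-$(1,0)$ data by construction. In the other direction one must verify that a $G$-invariant normal-form structure is completely determined by its fibrewise form together with its induced connection — this is precisely a naturality statement, and I would lean on \cite[Proposition 1.3]{ogc} (used already in the proof of Theorem~\ref{thm:gc_C_hg} to relate $L_Q$ and $L_E$) to pin down the isotropic subbundle from these two pieces of data. The main obstacle I anticipate is the reverse map: showing that the horizontal distribution extracted from an abstract generalized complex structure is genuinely a principal connection (i.e.\ $G$-invariant and complementary to the vertical bundle) and that its complexified $(1,0)$-part integrates to a holomorphic structure on $P$. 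This requires translating the algebraic integrability of $L$ into the $(1,1)$-type condition on the curvature, exactly the step that appears at the end of the proof of Theorem~\ref{thm:gc_C_hg}, now run in reverse. Once this correspondence between integrability and the $(1,1)$-curvature condition is established, the bijectivity follows formally.
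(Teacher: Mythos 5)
Your proposal is correct and follows essentially the same route as the paper: the forward map is Theorem \ref{thm:gc_C_hg} in the Abelian case, and the reverse map extracts the invariant symplectic structure from the ($G$-invariant, integrable) Poisson structure of $L$ and a principal connection from the CR part of $L$, whose curvature is of type $(1,1)$ by integrability, so that the Koszul--Malgrange theorem yields the holomorphic structure on the extension of $Q$ to $G^{\C}$. The only step you leave implicit that the paper makes explicit is recovering the complex structure on $M$ as the image under $\dif\!\p$ of the co-CR structure determined by $L$, which is needed before the $(1,1)$-curvature condition even makes sense.
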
  
\begin{proof} 
It is sufficient to prove that any generalized complex structure $L$ on $Q$ satisfying (ii) is obtained as in 
Theorem \ref{thm:gc_C_hg}\,.\\ 
\indent 
Firstly, as $L$ is $G$-invariant, its Poisson structure is, also, $G$-invariant. Together with the fact that $L$ is integrable 
this implies that the Poisson structure of $L$ is induced by some invariant symplectic structure on $G$.\\ 
\indent 
Let $\Dcal$ be the co-CR structure determined by $L$. Then $\dif\!\p(\Dcal)$ is a complex structure on $M$, where 
$\p:Q\to M$ is the projection.\\ 
\indent 
Also, if $\Cal$ is the CR structure determined by $L$ then $\K^{\C}=\Cal\oplus\overline{\Cal}$, where $\K$ is a principal connection 
on $Q$\,. Moreover, the fact that $\Cal$ is integrable is equivalent to the fact that the curvature form of $\K$ is of type $(1,1)$\,.\\ 
\indent 
Thus, by a classical result, due to Koszul and Malgrange, there exists a unique holomorphic structure on the extension 
of $Q$ to $G^{\C}$ whose Chern connection is~$\K$. 
\end{proof}


\begin{thebibliography}{10}


\bibitem{AleDav-GC}
D.~V.~Alekseevsky, L.~David, Invariant generalized complex structures on Lie groups, 
\textit{Proc. Lond. Math. Soc. (3)}, {\bf 105} (2012) 703--729. 

\bibitem{CavGua-nil}
G.~R.~Cavalcanti, M.~Gualtieri, Generalized complex structures on nilmanifolds,
\textit{J. Symplectic Geom.}, {\bf 2} (2004) 393--410.

\bibitem{Gua-thesis}
M.~Gualtieri, Generalized complex geometry, D.\ Phil.\ Thesis, University of Oxford, 2003.

\bibitem{KoNo-II}
S.~Kobayashi, K.~Nomizu, \textit{Foundations of differential geometry}, Vol.\ II, Wiley Classics Library (reprint of the 1969 original), Wiley-Interscience Publ., Wiley, New-York, 1996.

\bibitem{ogc}
L.~Ornea, R.~Pantilie, On holomorphic maps and Generalized Complex Geometry,
\textit{J. Geom. Phys.}, {\bf 61} (2011) 1502--1515.


\end{thebibliography}
\end{document}